\newtheoremstyle{theorem}
  {10pt}		  
  {10pt}  
  {\sl}  
  {\parindent}     
  {\bf}  
  {. }    
  { }    
  {}     
\theoremstyle{theorem}
\newtheorem{theorem}{Theorem}
\newtheorem{corollary}[theorem]{Corollary}
\newtheorem{lemma}[theorem]{Lemma}
\newtheoremstyle{defi}
  {10pt}		  
  {10pt}  
  {\rm}  
  {\parindent}     
  {\bf}  
  {. }    
  { }    
  {}     
\theoremstyle{defi}
\title{The inverse problem for the lattice points}
\author{\v Zeljka Ljuji\' c, Camilo Sanabria}
\begin{document}

\maketitle

\begin{abstract}
Fix an positive integer $n$. Let $K\subseteq\mathbb{R}^n$ be a compact set such that $K+\mathbb{Z}^n=\mathbb{R}^n$. We prove, via Algebraic Topology, that the integer points of the difference set of $K$, $(K-K)\cap\mathbb{Z}^n$, is not contained on the coordinate axes,
$\mathbb{Z}\times\{0\}\times\ldots\times\{0\}\cup\{0\}\times\mathbb{Z}\times\ldots\times\{0\}\cup\ldots\cup\{0\}\times\{0\}\times\ldots\times\mathbb{Z}$. This result gives a negative answer to a question posed by P. Hegarty and M. Nathanson on relatively prime lattice points.
\end{abstract}

Let $n\ge 1$. We say that the set $A\subset\mathbb{Z}^n$ is relatively prime if the elements of $A$ generates $\mathbb{Z}^n$. An $\mathcal{N}$-set is a compact set $K\subset\mathbb{R}^n$ such that for every $x\in\mathbb{R}^n$ there exists $y\in K$ with $x\equiv y$ (mod $\mathbb{Z}^n$). M. Nathanson proved in \cite{1} the following theorem
\begin{theorem} Let $A$ be a set of positive integers. The set is relatively prime if and only if there exist an $\mathcal{N}$-set K in $\mathbb{R}$ such that $A=(K-K)\cap\mathbb{N}$.
\end{theorem}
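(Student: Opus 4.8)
The plan is to identify ``relatively prime'' with the statement that the subgroup $\Gamma\subseteq\mathbb{Z}$ generated by $(K-K)\cap\mathbb{Z}$ (equivalently, by $A$) equals $\mathbb{Z}$, and to treat the two implications separately through the projection $\pi\colon\mathbb{R}\to S^1=\mathbb{R}/\mathbb{Z}$. The observation I would record first is the fibre description of the integer differences: writing each point of $\mathbb{R}$ as a representative in $[0,1)$ plus an integer level, a number $m\in\mathbb{Z}$ lies in $K-K$ if and only if some fibre over $\bar x\in S^1$ contains two points of $K$ whose levels differ by $m$. Equivalently $(K-K)\cap\mathbb{Z}=\bigcup_{\bar x\in S^1}\big(F(\bar x)-F(\bar x)\big)$, where $F(\bar x)\subset\mathbb{Z}$ is the set of levels occurring over $\bar x$; this is nonempty for every $\bar x$ precisely because $K$ is an $\mathcal{N}$-set. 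Note also that compactness of $K$ makes $K-K$ bounded, so $A$ is forced to be finite.

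For the necessity direction (existence of $K$ forces $A$ relatively prime) I would argue by the covering-space obstruction flagged in the abstract. Suppose $\Gamma\subsetneq\mathbb{Z}$, let $q\colon\mathbb{R}\to\mathbb{R}/\Gamma$ be the quotient and $p\colon\mathbb{R}/\Gamma\to S^1$ the induced covering, so that $p\circ q=\pi$. Since any two points of $K$ over the same $\bar x$ differ by an element of $(K-K)\cap\mathbb{Z}\subseteq\Gamma$, the continuous map $q|_K$ is constant on the fibres of $\pi|_K$ and descends to a map $s\colon S^1\to\mathbb{R}/\Gamma$ with $p\circ s=\mathrm{id}$. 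The one delicate point is the continuity of $s$: because $K$ is compact and $S^1$ Hausdorff, $\pi|_K$ is a closed surjection, hence a quotient map, so $s$ is continuous as $s\circ(\pi|_K)=q|_K$ is. Thus $s$ is a genuine section of $p$, and applying $\pi_1$ gives $p_*\circ s_*=\mathrm{id}$ on $\pi_1(S^1)\cong\mathbb{Z}$; as $p_*$ has image $\Gamma$, this forces $\Gamma=\mathbb{Z}$, a contradiction. Hence $\Gamma=\mathbb{Z}$ and $A$ is relatively prime. I regard this as the conceptual core of the statement.

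For the sufficiency direction (constructing $K$ from a relatively prime $A=\{a_1,\dots,a_k\}$) I would build $K$ as a union of short closed arcs lifted to integer levels, letting $\gcd(A)=1$ enter through the monodromy around the seam $0\sim 1$. Concretely, partition $[0,1]$ into $N$ small closed subintervals meeting at single points and assign to the $j$-th arc an integer level $L_j$; adjacent arcs then contribute the single integer differences $\pm(L_{j+1}-L_j)$, while the wrap-around contributes the extra difference $\pm\big((L_N-L_1)+1\big)$, the ``$+1$'' being exactly the winding of the fundamental domain. Choosing the levels amounts to choosing a walk on $\mathbb{Z}$ with steps in $\pm A$ whose net displacement is $-1$ and whose step sizes realize every element of $A$; such a walk exists precisely because $\gcd(A)=1$. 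With the arcs short enough (say $N\ge 3$ equal pieces) each overlap interval has length $<1$ and meets $\mathbb{Z}$ in a single point, so no spurious differences appear, and one checks $(K-K)\cap\mathbb{Z}=\{0\}\cup(\pm A)$, whence $(K-K)\cap\mathbb{N}=A$. The main obstacle here is controlling the seam so that no unintended integer difference (most dangerously $1$) is created; this is exactly what forces the net displacement $-1$ and is the constructive shadow of the topological obstruction used in the other direction.
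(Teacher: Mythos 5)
Your proof is correct. Note that the paper does not actually prove this theorem --- it quotes it from Nathanson's work and only sketches the strategy --- but that sketch (the ``only if'' direction via a geometric/topological obstruction, for which your section-of-a-covering argument $p_*\circ s_*=\mathrm{id}$ forcing $\Gamma=\mathbb{Z}$ is a clean realization, and the ``if'' direction via an explicit $\mathcal{N}$-set built from a partition of $[0,1]$ into subintervals translated by integer levels chosen so the seam difference $1+L_N-L_1$ vanishes) is exactly the two-pronged approach you carry out.
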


The proof of the sufficiency condition for a set $A$ to be relatively prime follows from a more general result from geometric group theory that holds for any dimension:
\begin{theorem} If $K$ is an $\mathcal{N}$-set in $\mathbb{R}^n$, then $A=(K-K)\cap\mathbb{Z}^n$ is a finite set of relatively prime lattice points.
\end{theorem}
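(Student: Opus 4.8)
Finiteness is the easy half: since $K$ is compact, the difference set $K-K$ is the image of the compact set $K\times K$ under the continuous subtraction map, hence compact, and a compact subset of $\mathbb{R}^n$ meets the discrete lattice $\mathbb{Z}^n$ in only finitely many points, so $A$ is finite. The whole weight of the statement therefore lies in showing that the subgroup $G:=\langle A\rangle\subseteq\mathbb{Z}^n$ generated by $A$ equals $\mathbb{Z}^n$. My plan is to encode the failure $G\subsetneq\mathbb{Z}^n$ as a nontrivial covering of the torus and to contradict it using that $K$ surjects onto the torus.

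Concretely, write $\pi:\mathbb{R}^n\to T:=\mathbb{R}^n/\mathbb{Z}^n$ for the standard projection and factor it through the intermediate quotient by $G$:
\[
\mathbb{R}^n \xrightarrow{\ p\ } X:=\mathbb{R}^n/G \xrightarrow{\ q\ } T,
\]
so that $q\circ p=\pi$ and $q$ is a covering map whose degree equals the index $[\mathbb{Z}^n:G]$ (its deck group being $\mathbb{Z}^n/G$). The defining property of an $\mathcal{N}$-set says exactly that $K+\mathbb{Z}^n=\mathbb{R}^n$, i.e. $\pi|_K:K\to T$ is surjective; moreover, being a continuous surjection from a compact space to a Hausdorff space, it is a closed map and hence a quotient map. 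The key elementary observation is that $p|_K$ is constant on the fibres of $\pi|_K$: if $x,y\in K$ satisfy $\pi(x)=\pi(y)$ then $x-y\in\mathbb{Z}^n$, and since also $x-y\in K-K$ we get $x-y\in A\subseteq G$, whence $p(x)=p(y)$.

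I would then invoke the universal property of the quotient map $\pi|_K$ to obtain a continuous map $g:T\to X$ with $p|_K=g\circ\pi|_K$; applying $q$ and using $q\circ p=\pi$ together with the surjectivity of $\pi|_K$ shows $q\circ g=\mathrm{id}_T$, so $g$ is a continuous section of the covering $q$. The topological punchline is that a covering map with connected total space cannot admit a continuous section unless it is a homeomorphism: since $X=\mathbb{R}^n/G$ is connected (a continuous image of $\mathbb{R}^n$), and since a section of a covering is an open embedding whose image $g(T)$ is also closed in the Hausdorff space $X$ (being the equalizer of $g\circ q$ and $\mathrm{id}_X$), the set $g(T)$ is clopen and nonempty, hence all of $X$. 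Therefore $q$ is a bijection, i.e. a degree-one covering, forcing $[\mathbb{Z}^n:G]=1$ and $G=\mathbb{Z}^n$, as desired.

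I expect the genuine obstacle to be this middle step rather than the bookkeeping: one must be sure the section $g$ is actually continuous, which is precisely where compactness of $K$ is used (to upgrade $\pi|_K$ from a bare surjection to a quotient map), and then that the clopen-image argument is valid, i.e. that sections of covering maps are open embeddings. Everything else — finiteness, the factorization $q\circ p=\pi$, and the identification of the degree with the index — is routine.
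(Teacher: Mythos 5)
Your proof is correct, and it is worth noting that the paper does not actually contain a proof of this theorem: it is quoted from Nathanson's paper \cite{1}, where it is deduced from a standard lemma of geometric group theory. That standard argument runs directly in $\mathbb{R}^n$: setting $G=\langle A\rangle$, one checks that $Y=\bigcup_{g\in G}(K+g)$ and $Z=\bigcup_{g\in\mathbb{Z}^n\setminus G}(K+g)$ are disjoint (if $x\in(K+g)\cap(K+h)$ then $g-h\in(K-K)\cap\mathbb{Z}^n\subseteq G$), are both closed because compactness of $K$ makes the unions locally finite, and together cover $\mathbb{R}^n$; connectedness of $\mathbb{R}^n$ forces $Z=\emptyset$, i.e.\ $G=\mathbb{Z}^n$. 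Your covering-space version pushes the same connectedness argument down to the intermediate quotient $X=\mathbb{R}^n/G$: the continuous section $g$ of the covering $q$ exists precisely because translates of $K$ by distinct cosets of $G$ are disjoint, and your clopen-image argument in $X$ plays the role of the clopen partition of $\mathbb{R}^n$. Every step you flag as delicate does check out: $\pi|_K$ is a quotient map because it is a closed continuous surjection from a compact space onto a Hausdorff one; $p|_K$ is constant on its fibres for exactly the reason you give; a section of a covering is an open embedding (locally it coincides with an inverse branch over an evenly covered neighbourhood); and its image is closed as the equalizer of $g\circ q$ and $\mathrm{id}_X$ in the Hausdorff space $X$ (Hausdorff because $G$ is a discrete group of translations, so $X\cong T^k\times\mathbb{R}^{n-k}$). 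What your route buys is a clean conceptual punchline --- a connected covering admitting a continuous section is trivial --- at the cost of covering-space machinery that the elementary clopen partition of $\mathbb{R}^n$ avoids; note also that your argument never needs $[\mathbb{Z}^n:G]$ to be finite a priori, which is a small point in its favor.
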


The other direction, the fact that every finite set of relatively prime positive integers can be realized as the difference of an $\mathcal{N}$-set was proved by giving an explicit construction of such an $\mathcal{N}$-set K for a prescribed set $A$. This problem can be seen as an inverse problem. 

In the same paper, the following natural question was asked:

\emph{Does every finite symmetric set of relatively prime lattice points that contains $0$ is of the form $(K-K)\cap\mathbb{Z}^n$ for some $\mathcal{N}$-set $K$?} 

The former theorem answers this question in the case $n=1$. The answer was not known for the higher dimensions. 

As an attempt to attack the case $n=2$, P. Hegarty raised the following question: Does there exist an $\mathcal{N}$-set $K$ such that $(K-K)\cap\mathbb{Z}^2\subset(\mathbb{Z}\times\{0\})\cup(\{0\}\times\mathbb{Z})$? This problem was solved independently by Z. Ljujic and C. Sanabria in \cite{4} and by L. Borisov and R. Jin in \cite{2}. They proved that the answer to this question is ``no'', by proving that the set $A=\{(-1,0), (0,-1), (0,0), (1,0), (0,1)\}$, although a finite, symmetric set of relatively prime lattice points containing $0$, is not of the form $(K-K)\cap\mathbb{Z}^2$ for any $\mathcal{N}$-set $K$. Whence, in the case $n=2$, the answer to the inverse problem is negative.

Here, we are giving an argument that will confirm the negative answer to the same question but for any $n>1$. We will start the proof by using the observation of L. Borisov and R. Jin that says that instead of considering any compact set $K$, it is enough to consider a set $B=\bigcup_{i=0}^{m-1}\bigcup_{j=0}^{m-1}B_{i,j}+u_{i,j}$, where $B_{i,j}=[\frac{i}{m},\frac{i+1}{m}]\times[\frac{j}{m},\frac{j+1}{m}]$ and $u_{i,j}\in\mathbb{Z}^2$, for some $m$. This result was generalized for higher dimensions by M. Nathanson, in \cite{3}, 
\begin{theorem} Let $K$ be a compact set of $\mathbb{R}^n$. For $J=(j_1,\dots,j_n)\in\mathbb{Z}^n$, let
\[C_{N,J}=[\frac{j_1}{2^N},\frac{j_1+1}{2^N}]\times\cdots\times[\frac{j_n}{2^N},\frac{j_n+1}{2^N}].
\]
\noindent There exists an integer $N_0$ such that for every integer $N\ge N_0$ there is a finite subset $\mathcal{J}_n$ of $\mathbb{Z}^n$ such that the set 
\[K_N=\bigcup_{J\in\mathcal{J}_n}C_{N,J}
\]
satisfies $K\subset K_N$ and
\[(K-K)\cap\mathbb{Z}^n=(K_N-K_N)\cap\mathbb{Z}^n.
\]
\end{theorem}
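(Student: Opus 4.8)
The plan is to take the most natural candidate for $K_N$, namely the union of all dyadic cubes at scale $2^{-N}$ that actually meet $K$, and then show that for $N$ large the only spurious integer differences one could manufacture are ruled out by a compactness gap argument. Concretely, I would set
\[
\mathcal{J}_n=\{J\in\mathbb{Z}^n: C_{N,J}\cap K\neq\emptyset\},\qquad K_N=\bigcup_{J\in\mathcal{J}_n}C_{N,J}.
\]
Since $K$ is compact it is bounded, so only finitely many dyadic cubes meet it and $\mathcal{J}_n$ is finite; moreover every point of $K$ lies in some such cube, so $K\subseteq K_N$. The inclusion $K\subseteq K_N$ immediately gives $K-K\subseteq K_N-K_N$, hence the easy containment $(K-K)\cap\mathbb{Z}^n\subseteq(K_N-K_N)\cap\mathbb{Z}^n$, which holds for every $N$.

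The content lies in the reverse containment $(K_N-K_N)\cap\mathbb{Z}^n\subseteq(K-K)\cap\mathbb{Z}^n$. First I would record a metric estimate: if $x$ lies in a cube $C_{N,J}\subseteq K_N$ and $p\in C_{N,J}\cap K$, then $x$ and $p$ share a cube of side $2^{-N}$, so $\|x-p\|_\infty\le 2^{-N}$; thus $K_N\subseteq K+[-2^{-N},2^{-N}]^n$. Subtracting, every element of $K_N-K_N$ is within sup-distance $2\cdot 2^{-N}$ of $K-K$, i.e. $K_N-K_N\subseteq(K-K)+[-2\cdot 2^{-N},2\cdot 2^{-N}]^n$. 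In particular every lattice point $v\in(K_N-K_N)\cap\mathbb{Z}^n$ satisfies $\mathrm{dist}_\infty(v,K-K)\le 2\cdot 2^{-N}$.

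The key step is then to make the choice of $N_0$ uniform. Since $K-K$ is compact, fix $R$ with $K-K\subseteq[-R,R]^n$; then for all $N\ge 1$ every integer point of $K_N-K_N$ lies in the finite set $F=\mathbb{Z}^n\cap[-R-2,R+2]^n$. Split $F$ into the good points $F\cap(K-K)$ and the bad points $F\setminus(K-K)$. Each bad point $v$ has $\mathrm{dist}_\infty(v,K-K)>0$ because $K-K$ is closed, and since there are only finitely many bad points, $\delta:=\min\{\mathrm{dist}_\infty(v,K-K): v\in F\setminus(K-K)\}$ is strictly positive (the statement being trivial if there are no bad points). Choosing $N_0$ so that $2\cdot 2^{-N_0}<\delta$, for every $N\ge N_0$ no bad point can lie within sup-distance $2\cdot 2^{-N}$ of $K-K$, so by the previous paragraph $(K_N-K_N)\cap\mathbb{Z}^n$ contains no bad point and is therefore contained in $K-K$. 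Together with the easy containment this yields the desired equality.

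I expect the only real subtlety, the main obstacle, to be the realization that $N$ cannot be chosen cube-by-cube or integer-by-integer but must beat the worst-case gap $\delta$ simultaneously for all the finitely many candidate spurious integers. This is exactly where compactness of $K-K$ enters twice: closedness provides $\delta>0$ for each bad point, and boundedness makes $F$ finite so that the minimum $\delta$ remains positive, which is what makes a single $N_0$ work for all $N\ge N_0$.
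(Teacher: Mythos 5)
Your proof is correct and complete: the choice of $\mathcal{J}_n$ as the dyadic cubes meeting $K$ gives $K\subseteq K_N$ and $K_N-K_N\subseteq (K-K)+[-2^{1-N},2^{1-N}]^n$, and the positive gap $\delta$ between $K-K$ and the finitely many lattice points near it but outside it (available because $K-K$ is compact) yields a single $N_0$ that works for all $N\ge N_0$. Note that the paper itself contains no proof of this statement --- it is quoted as a result of Nathanson from the preprint cited as \cite{3} --- so there is no in-paper argument to compare against; your argument is the natural one and stands on its own.
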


We proceed to the proof of the negative answer to the question.

\begin{theorem}\label{thm}  It does not exist a compact set $K$ s.t. $\mathbb{R}^n=K+\mathbb{Z}^n$ and $(K-K)\cap\mathbb{Z}^n\subseteq(\mathbb{Z}\times\{0\}\times\ldots\times\{0\}\cup\{0\}\times\mathbb{Z}\times\ldots\times\{0\}\cup\ldots\cup\{0\}\times\{0\}\times\ldots\times\mathbb{Z})$.
\end{theorem}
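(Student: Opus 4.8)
The plan is to assume, for contradiction, that such a $K$ exists with $(K-K)\cap\mathbb{Z}^n$ contained in the union of the coordinate axes, and to extract a topological obstruction on the torus $\mathbb{T}^n=\mathbb{R}^n/\mathbb{Z}^n$. First I would apply Theorem 3 to replace $K$ by a finite union $K_N=\bigcup_{J\in\mathcal{J}}C_{N,J}$ of closed dyadic cubes of side $2^{-N}$. Since $K\subseteq K_N$ and $K+\mathbb{Z}^n=\mathbb{R}^n$, the larger set still covers, so $K_N+\mathbb{Z}^n=\mathbb{R}^n$, and the identity $(K_N-K_N)\cap\mathbb{Z}^n=(K-K)\cap\mathbb{Z}^n$ keeps the difference set inside the axes. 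Writing $q\colon\mathbb{R}^n\to\mathbb{T}^n$ for the quotient map, the covering condition says exactly that $q|_{K_N}$ is a continuous surjection from the compact cubical set $K_N$ onto $\mathbb{T}^n$, so all of the geometry is encoded in this single map and its fibers.

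The combinatorial heart of the argument is a fiber-alignment lemma. Fix $t\in\mathbb{T}^n$ and set $F_t=K_N\cap q^{-1}(t)$, a finite subset of the coset $t+\mathbb{Z}^n$ whose pairwise differences lie in $\mathbb{Z}^n$. If $a,b,c\in F_t$ are distinct with $a-b$ on the $i$-th axis and $a-c$ on the $j$-th axis for $i\neq j$, then $b-c=(a-c)-(a-b)$ has two nonzero coordinates and leaves the axes, contradicting the hypothesis. Hence every fiber with $|F_t|\geq 2$ lies on a single line $a+\mathbb{R}e_i$ parallel to a coordinate axis, and the direction $i=i(t)$ is well defined there. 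Thus the multiplicity of $q|_{K_N}$ over each point of the torus is concentrated in exactly one coordinate direction; applied to the cubical structure, each torus-cube $\bar C_J$ has its set of lifts contained in an arithmetic progression along one $e_{i(J)}$.

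The topological step is to convert this pointwise ``direction of multiplicity'' into a contradiction with the nonvanishing of the top cup product in $H^*(\mathbb{T}^n;\mathbb{Z})=\Lambda(x_1,\dots,x_n)$, where $x_1\cup\cdots\cup x_n$ generates $H^n(\mathbb{T}^n)\cong\mathbb{Z}$. The plan is to produce an open cover $V_1,\dots,V_n$ of $\mathbb{T}^n$ in which $V_i$ is a neighborhood of the locus where the multiplicity is aligned with $e_i$ (absorbing the single-sheeted locus, on which a genuine local section into the simply connected $\mathbb{R}^n$ exists and kills every $x_j$, into $V_1$). Over $V_i$ one selects the sheet of $F_t$ that is extreme along the $i$-th axis; because this choice varies only in the $i$-th coordinate, the aim is to show that $x_i$ restricts to $0$ on $V_i$. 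The standard cup-length lower bound for Lusternik--Schnirelmann category, $\mathrm{cat}(\mathbb{T}^n)=n+1$ --- concretely, that $\alpha_1\cup\cdots\cup\alpha_n=0$ whenever $\{U_i\}$ is an open cover with $\alpha_i|_{U_i}=0$ --- then forces $x_1\cup\cdots\cup x_n=0$, which is absurd, completing the contradiction.

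The hard part will be the third step, and specifically the claim that $x_i|_{V_i}=0$. The extreme-sheet selection need not be globally continuous: crossing the boundaries of the dyadic cube-columns in the $i$-direction makes the chosen lift jump by multiples of $e_i$, and such winding is itself permitted by the axis hypothesis (it only produces on-axis differences). Consequently the \emph{local} fiber-alignment lemma alone does not kill $x_i$ on $V_i$; one must feed in the \emph{global} covering constraint $q(K_N)=\mathbb{T}^n$ together with how face-adjacent torus-cubes link their lift-sets and directions, to rule out nonzero winding in each coordinate. Making the direction loci into honest open sets with connected overlaps (so that Mayer--Vietoris propagates the degree-one vanishing from the pieces), and taming the shared-face overlaps of the closed cubes and the fibers of intermediate size, are the remaining technical hurdles; this is where the real content of ``via Algebraic Topology'' lives.
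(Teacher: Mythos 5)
Your first two steps match the paper: the reduction to a cubical set via Theorem 3 is exactly how the paper begins, and your fiber-alignment lemma (three points of a fiber cannot realize two distinct axis directions, so each multiple fiber lies on a single line $a+\mathbb{R}e_i$) is precisely the paper's observation that each small cube is monochromatic, i.e.\ carries differences along only one coordinate axis. The problem is in the third step, and it is not merely the technical hurdle you flag at the end: the key claim you propose to prove, namely $x_i|_{V_i}=0$ where $V_i$ is a neighborhood of the $e_i$-aligned locus, is \emph{false} under the hypotheses. The covering condition forces the opposite: since a loop in the class $e_1$ must acquire its $e_1$-winding somewhere, and (as you yourself observe) winding in the $i$-th coordinate is exactly the kind of jump the extreme-sheet selection over $V_i$ is permitted to make, there must exist a loop in the $e_1$-aligned region with nonzero $e_1$-winding. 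Indeed the paper proves exactly this (``there exists a color-$1$ component that is non-contractible on the torus''). So the global constraint you hope to ``feed in'' to rule out winding in fact guarantees it, and this branch of your plan cannot be completed. What your own setup actually gives is the transverse vanishing $x_j|_{V_i}=0$ for $j\neq i$ (the sheets over $V_i$ differ only by multiples of $e_i$, so a loop in $V_i$ closes up modulo $\mathbb{Z}e_i$); with that statement the cup-length/LS-category argument still yields a contradiction by taking $\alpha_i=x_{\sigma(i)}$ for a derangement $\sigma$, since then $\alpha_i|_{V_i}=0$ while $\alpha_1\cup\cdots\cup\alpha_n=\pm\, x_1\cup\cdots\cup x_n\neq 0$. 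But even this corrected claim is not proved in your proposal: making the ``jumps only by multiples of $e_i$'' rigorous requires a device that converts combinatorial data on the cubes into $H^1(T)$, and that is where the real work lies.

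For comparison, the paper supplies exactly that device: it records the translation vectors $u_J$ as vertex labels, takes edge-differences to get a $\mathbb{Z}^n$-valued cellular cocycle $\psi$ whose class corresponds to $-\mathrm{id}$ under $H^1(T;\mathbb{Z}^n)\cong\mathrm{Hom}(H_1(T),\mathbb{Z}^n)$, so that the homotopy class of any loop is read off as the sum of edge labels (the ``gain''). It then decomposes $T$ into monochromatic components, shows each boundary is a union of injectively embedded $(n-1)$-spheres made of white (zero-labelled) edges, and proves via lifting to $\mathbb{R}^n$ and the Jordan--Brouwer separation theorem a dichotomy: each such component is either $\pi_1$-trivial in $T$ or $\pi_1$-surjective. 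A non-contractible color-$1$ component must exist to produce the gain $-e_1$, but by the dichotomy it would also contain a loop of gain $-e_2$, which only color-$2$ squares can produce --- contradiction. If you fix the index error and establish the transverse vanishing (essentially by constructing the paper's cocycle $\psi$ and restricting its $j$-th component to the color-$i$ plus white subcomplex, where it is identically zero), your cup-product route would bypass the paper's separation-theorem lemma entirely; as written, however, the argument has a genuine gap at its central step.
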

\begin{proof} Let us assume that such set $K$ exists. Then it exists $m\in\mathbb{Z}_{>0}$ and set $B=\bigcup_{j_1=0}^{m-1}\ldots\bigcup_{j_n=0}^{m-1}B_{(j_1,\ldots,j_n)}+u_{(j_1,\ldots,j_n)}$, where $B_{(j_1,\ldots,j_n)}=[\frac{j_1}{m},\frac{j_1+1}{m}]\times\ldots,\times[\frac{j_n}{m},\frac{j+1}{m}]$ and $u_{(j_1,\ldots,j_n)}\in\mathbb{Z}^n$, such that $(B-B)\cap\mathbb{Z}^n\subseteq(\mathbb{Z}\times\{0\}\times\ldots\times\{0\}\cup\ldots\cup\{0\}\times\{0\}\times\ldots\times\mathbb{Z})$. Translating $K$ by $-u_{(0,\ldots,0)}$, we may assume $u_{(0,\ldots,0)}=(0,\ldots,0)$.

Let us consider the unit square subdivided into $m^n$ squares $B_{(j_1,\ldots,j_n)}$, where $0\leqslant j_1,\ldots,j_n \leqslant m-1$. We label the vertices $(\frac{j_1}{m},\ldots,\frac{j_n}{m})$, where $0\leqslant j_1,\ldots,j_n \leqslant m$, with the value $v_{(j_1,\ldots,j_n)}$ in the following way
\begin{equation*} 
v_{(j_1,\ldots,j_n)} = \left\{ 
\begin{array}{rl} 
u_{(j_1,\ldots,j_n)} & \text{if } 0\leqslant j_1,\ldots,j_n \leqslant m-1\\ 
u_{(i_1,\ldots,i_n)} -\sum_{l=1,j_l=m}^{l=n} e_l & \text{where } i_l=j_l \text{ if } 0\leqslant j_l \leqslant m-1\\
                                                                     & \text{and } i_l=0 \text{ if } j_l=m;
\end{array} \right. 
\end{equation*} 
\noindent and where $e_l$ is the $l$th vector in the standard basis, i.e. the vector with a $1$ on the $l$th entry and zeroes everywhere else.

We direct the edges, i.e. the edges of the $B_{(j_1,\ldots,j_n)}$'s, in the standard positive direction, i.e. from $u_{(j_1,\ldots,j_n)}$ to $u_{(j_1,\ldots,j_n)}+\frac{e_l}{m}$, and we label them with the value of the differences: value at the ending vertex minus value at the initial vertex. Note, that the unit square subdivided in this fashion, and with the standard orientation, can be seen as the torus $T$ with a given $\square$-complex structure. If we denote the labeling of the edges by $\psi$, then
\begin{equation*}
\psi([u_{(j_1,\ldots,j_n)}, u_{(j_1,\ldots,j_n)+e_l}])=v_{(j_1,\ldots,j_n)+e_l}-v_{(j_1,\ldots,j_n)},
\end{equation*}
\noindent for $0\leqslant j_l \leqslant n-1, 0\leqslant j_1,\ldots,\hat{j_l},\ldots,j_n \leqslant n$. Note that 
\begin{equation*}
\psi([u_{(0,j_2,\ldots,j_n)}, u_{(0,j_2,\ldots,j_n)+e_l}])=\psi(u_{(m,j_2,\ldots,j_n)}, u_{(m,j_2,\ldots,j_n)+e_l}]),
\end{equation*}
for $0\leqslant j_2,\ldots,j_n \leqslant m-1$ and $e_l\ne e_1$;
\begin{equation*}
\psi([u_{(j_1,0,\ldots,j_n)}, u_{(j_1,0,\ldots,j_n)+e_l}])=\psi(u_{(j_1,m,\ldots,j_n)}, u_{(j_1,m,\ldots,j_n)+e_l}]),
\end{equation*}
for $0\leqslant j_1,j_3\ldots,j_n \leqslant m-1$ and $e_l\ne e_2$; $\ldots$ and
\begin{equation*}
\psi([u_{(j_1,\ldots,0)}, u_{(j_1,\ldots,0)+e_l}])=\psi([u_{(j_1,\ldots,m)}, u_{(j_1,\ldots,m)}]),
\end{equation*}
for $0\leqslant j_1,j_2\ldots,j_{n-1} \leqslant m-1$ and $e_l\ne e_n$, so $\psi$ is a well-defined function from the edges of $T$ to the abelian group $\mathbb{Z}^n$. Moreover,
\begin{eqnarray*}
\psi([u_{(j_1,\ldots,j_n)}, u_{(j_1,\ldots,j_n)+e_l}]) & + & \psi([u_{(j_1,\ldots,j_n)+e_l}, u_{(j_1,\ldots,j_n)+e_l+e_k}])\\
-\psi([u_{(j_1,\ldots,j_n)+e_k}, u_{(j_1,\ldots,j_n)+e_l+e_k}]) & - & \psi([u_{(j_1,\ldots,j_n)}, u_{(j_1,\ldots,j_n)+e_k}])=0
\end{eqnarray*}
for $0\leqslant i, j\leqslant n-1$, so we can see $\psi$ as one representative of an element of the cohomology group $H^1(T;\mathbb{Z}^n)$. There is a natural map $h:H^1(T;\mathbb{Z}^n)\rightarrow\textrm{Hom}(H_1(T),\mathbb{Z}^n)$ that sends $\psi$ to $\overline{\psi_0}:H_1(T)\rightarrow\mathbb{Z}^n$ where $\overline{\psi_0}([[0,e_l]])=-e_l$, for $1\leqslant l \leqslant n$. Here, we were using that $H_1(T)=\mathbb{Z}^n$ with basis the homology classes $[[0,e_1]],\ldots,[[0,e_n]]$. Hence, $\overline{\psi_0}$ is an isomorphism. This means that
\[
(*) \left. \begin{array}{l}\textrm{we can read the homotopy type of a closed curve from the sum} \\ \textrm{of the values associated by } \psi \textrm{ to the edges forming the curve.} \end{array} \right.
\]

All the values associated to the edges are lying in the set $B-B$. Therefore, we can color the edges in the following way:
color-$1$ if  the value of the edge lays on the $x_1$-axis and it is different than $0$; color-$2$ if the value of the edge lays on the $x_2$-axis and it is different than $0$; $\ldots$ color-$n$ if the value of the edge lays on the $x_n$-axis and it is different than $0$; and white if the value of the edge is $0$. Considering any $2^m$ adjacent squares sharing a common vertex on the torus, we can see that all the squares can have only color-$1$ and white edges, or color-$2$ and white,$\ldots$ or color-$n$ and white or all white edges. We will be calling them color-$1$, color-$2$,$\ldots$ color-$n$ and white squares, respectively. Note that, by construction, no square can have only one non-white color. Also, the common edge between squares of different color is white.

We divide the unit square into color-$1$, color-$2$, $\ldots$ color-$n$ and white components. By component, we mean a monochromatic collection of squares, maximal with respect to inclusion, such that when seen on the surface of the torus it is connected. 

Let $C$ be a component and $\sigma=\partial C$ the union of faces enclosing $C$. First, we prove that $\sigma$ is a union of $n-1$-dimensional spheres (we will call them $n-1$-spheres). The proof is by induction on the number $k$ of squares in $C$. If $k=1$, the component is made of just one square, so $\sigma$ is the $n-1$-sphere enclosing the square. Let $k\ge 2$, and suppose that any component having less than $k$ squares is enclosed by a union of $n-1$-spheres. Let $C_0$ be a square in $C$ and let $C_1$ be the union of the squares in $C$ different than $C_0$. Then $C=C_0\cup C_1$. Moreover, $C_1$ can be seen as a disjoint union of components, each of them having less than $k$ squares, hence enclosed by a union of $n-1$-spheres. Thus, $\partial C_1$ is a union of $n-1$-spheres. The intersection $C_0\cap C_1$ can be either $0$-dimensional, $1$-dimensional, $\ldots$ or an $n-1$-dimensional object. In each of the cases, we obtain that $\sigma$ is the union of $n-1$-dimensional spheres. As any union of $n-1$-spheres can be seen as the union of injective images of $n-1$-spheres into the torus, we conclude that $\sigma$ is the union of injective images of $n-1$-spheres. Note that $\sigma$ is also the topological boundary of $C$. We will refer to it as a boundary of $C$.

Given a curve, we call \emph{gain} the sum of the values associated by $\psi$ to the edges forming it. A gain of value $r\cdot e_k$, with $r\in \mathbb{Z}^*$, can only be obtained through squares of color-$k$. Therefore, because the gain of each curve $[[0,e_k]]$ is $-e_k$, for $1\leqslant k \leqslant n$ the coloring must contain components of every non-white color. The boundary of a component is a union of injective images of $n-1$-spheres with white edges only. Whence, from $(*)$ above, the closed curves on the boundary of a component are contractible on the torus. Indeed $\overline{\psi_0}$ on such closed curves is zero. Here, by curve being contractible on the torus we mean that its homotopy class is $0$.

Let us consider any color-$1$ component. If a component is contractible on the torus any line that crosses the component will have the $e_1$ gain equal to 0 inside the component. The horizontal gain $-e_1$ is obtained only through color-$1$ squares, so there exists a color-$1$ component that is non-contractible on the torus. On the other hand, it follows from the following lemma and its corollary, that if a component has a boundary that consists of injective images of $n-1$-spheres whose contribution to the fundamental group of the torus is trivial, then it is either contractible on the torus or it contains loops generating the fundamental group of the torus. Whence, there exist color-$1$ component inside which we can obtain every gain, $-e_1,\ldots, -e_n$. This is a contradiction.

\end{proof}

\begin{lemma} Let $C$ be a component, and $i:C\rightarrow T$ be the inclusion map. Assume that  the boundary of $C$ is given by the image of an injective map from the $n-1$-sphere into $T$. Then
\[
i_*(\Pi_1(C))=\left\{
\begin{array}{l}
0 \\
\mathbb{Z}^n 
\end{array}\right.
\] 
\end{lemma}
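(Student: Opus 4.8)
The plan is to pass to the universal cover $p:\mathbb{R}^n\to T$ and analyze the preimage $\tilde C=p^{-1}(C)$, using the standard fact that for a universal covering the deck group $\Pi_1(T)=\mathbb{Z}^n$ acts transitively on the set of connected components of $p^{-1}(C)$, with the stabilizer of a component equal to $i_*(\Pi_1(C))$. Thus it suffices to show that $\tilde C$ is either connected, which forces the stabilizer and hence $i_*(\Pi_1(C))$ to be all of $\mathbb{Z}^n$, or a disjoint family of components that $\mathbb{Z}^n$ permutes freely, which forces $i_*(\Pi_1(C))=0$; the whole point is to rule out every intermediate subgroup, and here the hypothesis that $\partial C$ is a \emph{single} embedded sphere is what makes this possible.

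First I would lift the boundary sphere $\sigma=\partial C$. Since $\sigma$ is the frontier of a nonempty proper region it separates $T$, hence it is null-homologous; because $\Pi_1(S^{n-1})=0$ for $n\ge 3$, and because $H_1(T)\cong\Pi_1(T)$ is abelian for $n=2$ so that null-homologous simple closed curves are null-homotopic, the inclusion $\sigma\hookrightarrow T$ is null-homotopic and therefore lifts. Consequently $p^{-1}(\sigma)=\bigsqcup_{v\in\mathbb{Z}^n}(\tilde\sigma_0+v)$ is a disjoint family of embedded $(n-1)$-spheres, namely the integer translates of one lift $\tilde\sigma_0$, which the deck group permutes simply transitively.

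Next I would pin down the complementary regions of this family in $\mathbb{R}^n$. By Jordan--Brouwer separation (valid for any topological embedding via Alexander duality, so no local flatness is needed) each $\tilde\sigma_v$ bounds a bounded inside $I_v$ and an unbounded outside. The insides are pairwise disjoint: if $I_w\subsetneq I_v$ with $w\ne v$ then, writing $I_v=I_0+v$, translation by the nonzero vector $w-v$ would carry the bounded set $I_0$ properly into itself, and iterating contradicts boundedness. Hence $\mathbb{R}^n\setminus p^{-1}(\sigma)$ consists of the pairwise disjoint bounded regions $I_v$ together with a single unbounded region $O=\mathbb{R}^n\setminus\bigcup_v\overline{I_v}$, and $O$ is the region abutting every $\tilde\sigma_v$ from the outside. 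Now $\tilde C$ is $\mathbb{Z}^n$-invariant, and because $\sigma=\partial C$ exactly one local side of each $\tilde\sigma_v$ lies in the interior of $\tilde C$; therefore the placement of the single region $O$ decides everything. If $O\subseteq\tilde C$ then every $I_v$ falls in the complement and $\tilde C=\overline{O}=\mathbb{R}^n\setminus\bigsqcup_v I_v$ is connected, giving $i_*(\Pi_1(C))=\mathbb{Z}^n$; if instead $O\subseteq\tilde C^c$ then every $\overline{I_v}\subseteq\tilde C$ and $\tilde C=\bigsqcup_v\overline{I_v}$, whose components are freely permuted by the translations, giving stabilizer and hence $i_*(\Pi_1(C))$ equal to $0$. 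These are exactly the two alternatives in the statement.

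I expect the main obstacle to be the purely geometric step: establishing that the lifts of $\sigma$ are unnested and that their complement in $\mathbb{R}^n$ has precisely one unbounded region $O$ which borders every sphere from outside. The unnesting is handled by the translation argument above, but verifying that $O$ is connected (the complement of a locally finite, pairwise disjoint family of the compact sets $\overline{I_v}$) requires a little care, and this is the sole place where the single-embedded-sphere hypothesis, as opposed to an arbitrary null-homologous cycle, is genuinely used. The lifting step for $n=2$, where $\sigma$ could a priori be an essential circle, is the secondary point, resolved by the separation/Hurewicz argument in the second paragraph.
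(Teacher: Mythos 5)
Your outline is correct in its main steps but takes a genuinely different route from the paper, and it leaves its hardest step unproved. On the comparison: the paper fixes a single lift $\widetilde{s}$ of the boundary sphere, proves that $p$ restricted to the closed inside $\widetilde{D}$ is injective (your iterated-translation argument for unnesting the insides is cleaner than the paper's Lebesgue-measure comparison of $\widetilde{D}$ and $\widetilde{D}+a$), identifies $p(\widetilde{D})$ with $C$ or with $\overline{C^C}$ by showing --- via an induction on the number of cubes --- that $T\setminus\mathrm{Im}(s)$ has exactly two connected pieces, and then computes $i_*(\pi_1(C))$ by explicit path lifting together with Van Kampen. You instead describe the entire preimage $p^{-1}(C)$ and read off $i_*(\pi_1(C))$ as the stabilizer of a component under the deck action, which eliminates the Van Kampen step and makes the $0$-versus-$\mathbb{Z}^n$ dichotomy transparent. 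Your lifting argument for $\sigma$ (simple connectivity of $S^{n-1}$ for $n\geqslant 3$, and separating $\Rightarrow$ null-homologous $\Rightarrow$ null-homotopic for $n=2$) is also self-contained, whereas the paper gets null-homotopy for free from the whiteness of the boundary edges and property $(*)$.

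The genuine gap is exactly the step you flag and then do not carry out: the connectedness of $O=\mathbb{R}^n\setminus\bigsqcup_v\overline{I_v}$. This is not a routine point-set remark, and it is load-bearing: if $O$ had more than one $\mathbb{Z}^n$-orbit of components, some orbits could lie in $\widetilde{C}$ and others in its complement, and the stabilizer of a component of $\widetilde{C}$ could then be a proper nontrivial subgroup of $\mathbb{Z}^n$, destroying the dichotomy you need. Since $\bigsqcup_v\overline{I_v}$ is $\mathbb{Z}^n$-invariant, $O=p^{-1}\bigl(T\setminus p(\overline{I_0})\bigr)$, so its connectedness is equivalent to the connectedness of $T\setminus p(\overline{I_0})$ together with the surjectivity of $\pi_1\bigl(T\setminus p(\overline{I_0})\bigr)\rightarrow\pi_1(T)$; that is, it is essentially the same nontrivial fact the paper establishes combinatorially by inducting on the cubes of $C$ to show that $T\setminus\mathrm{Im}(s)$ consists of the two connected open sets $\mathrm{Int}(C)$ and $C^C$. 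To close your version you must either import that induction (available here because the spheres are subcomplexes of the cubical structure, hence tame) or argue by duality: $\check{H}^{n-1}(\overline{I_0})\cong\widetilde{H}_0(S^n\setminus\overline{I_0})=0$ gives $H_*\bigl(T,T\setminus p(\overline{I_0})\bigr)=0$ in degrees $0$ and $1$, whence $T\setminus p(\overline{I_0})$ is connected and $H_1\bigl(T\setminus p(\overline{I_0})\bigr)\rightarrow H_1(T)$ is onto, which suffices since $\pi_1(T)$ is abelian. As written, the proposal assumes precisely the point where the work lies.
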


\begin{proof} From the assumptions, let $\partial C$ be the image of the injective map $s: S^{n-1}\rightarrow T$, where $S^{n-1}$ denotes the $n-1$-sphere. Fix a point $x_0\in\partial C$. Let $f:S^1\rightarrow \partial C$ be a close curve, i.e. a map from the $1$-sphere, in the boundary of $C$. We denote by $p:\mathbb{R}^n\rightarrow T=\mathbb{R}^n/\mathbb{Z}^n$ the universal cover of $T$. We already establish that the edges on $\partial C$ are white, thus from $(*)$ we obtain that $f$ is null-homotopic. Therefor, if $j:\partial C\rightarrow T$ is the inclusion map, $(j\circ s)_*(\pi_1(S^{n-1}))=j_*(\pi_1(\partial C))=0\subset0=p_*(\pi_1(\mathbb{R}^2))$, hence for each lift $\widetilde{x_0}$ of $x_0$ to $\mathbb{R}^n$, there is a unique lift $\widetilde{s}:S^{n-1}\rightarrow\mathbb{R}^n$ lifting $s$ and the image containing $\widetilde{x_0}$. As $p$ is a covering map, it is a local homeomorphism of $\mathbb{R}^n$ with $T$, so $s$ being injective implies that the lifts are injective as well. Hence by Jordan-Brouwer Separation Theorem, $\widetilde{s}$ separates $\mathbb{R}^n$ into two open, path-connected components, of which the image of $\widetilde{s}$ is the common boundary. 

We fix a lift $\widetilde{x_0}$ and consider the lifting $\widetilde{s}:S^{n-1}\rightarrow\mathbb{R}^n$ of $s$ with $\widetilde{x_0}\in\widetilde{s}(S^{n-1})$. Let us denote by $\widetilde{U}$ the interior region defined by $\widetilde{s}$ and by $\widetilde{D}=\overline{\widetilde{U}}=\widetilde{U}\cup\textrm{Im}(\widetilde{s})$ the closure. We will prove that $p|_{\widetilde{D}}$ is injective. Let us assume the contrary, so there exist $x, y\in \widetilde{D}$ such that $x\ne y$ and $p(x)=p(y)$. Hence, there exists $a\in\mathbb{Z}^n_{\ne 0}$ such that $y=x+a$. On the other hand, $\widetilde{D}$ is the closure of a connected set, so it is connected and since it is locally path-connected, $\widetilde{D}$ is path-connected. Thus there exists a path $\widetilde{\gamma}:I\rightarrow \widetilde{D}$, where $I=[0,1]$,  with $\widetilde{\gamma}(0)=x$ and $\widetilde{\gamma}(1)=\widetilde{x_0}$. We denote by $\widetilde{\delta}$ the map from the wedge sum, $I\lor S^{n-1}$ of $I$ and $S^{n-1}$ along $z=s^{-1}(x_0)$:  
\begin{equation*} 
 \widetilde{\delta}(t)= \left\{ 
\begin{array}{rl} 
\widetilde{\gamma}(t) & \text{if } t\in I \\ 
\widetilde{s}(t)& \text{if } t\in S^{n-1}\\ 
\end{array} \right. 
\end{equation*} 

\noindent  By assumption, $x$ and $y$ are two different lifts of $p(x)$, so we can consider the lift $\widetilde{\delta}$ of $\delta=p\widetilde{\delta}$ with $\widetilde{\delta}(0)=x$ and the lift $\widetilde{\delta}'$ of $\delta$ with $\widetilde{\delta}'(0)=y$. We consider the map $\widetilde{\delta}+a:I\lor S^{n-1}\rightarrow\mathbb{R}^n$. We have $p(\widetilde{\delta}+a)=\delta$ and $(\widetilde{\delta}+a)(0)=y$, so by the unique lifting property $\widetilde{\delta}'=\widetilde{\delta}+a$. Now, every lift of $\delta$ contains a lift of $s$, since $\delta(t)=s(t)$, for $t\in S^{n-1}$. Whence $\widetilde{s}(t)=\widetilde{\delta}(t)$, for $t\in S^{n-1}$, is the lift of $s$ containing $\widetilde{x_0}$ and $\widetilde{s}'(t)=\widetilde{\delta}'(t)$, for $t\in S^{n-1}$ is the lift of $s$ containing $\widetilde{s}'(z)=\widetilde{s}(z)+a=\widetilde{x_0}+a$. Furthermore, $\widetilde{s}'=\widetilde{s}+a$. Thus $\textrm{Im}(\widetilde{s})\cap \textrm{Im}(\widetilde{s'})=\emptyset$. For if $\widetilde{s}(t_1)=\widetilde{s'}(t_2)$, for some $t_1,t_2\in S^{n-1}$, then $\widetilde{s}(t_1)=\widetilde{s}(t_2)+a$, hence $t_1\ne t_2$. Moreover, $s(t_1)=p\widetilde{s}(t_1)=p\widetilde{s'}(t_2)=s(t_2)$, and since $s$ is injective $t_1=t_2$. Whence $s(t_1)+a=s'(t_1)=s'(t_2)=s(t_1)$, contradicting $a\ne 0$. We obtained that the images of the liftings $\widetilde{s}$ and $\widetilde{s}'=\widetilde{s}+a$ of $s$ are disjoint. We consider the intersection $\widetilde{D}\cap\widetilde{D'}$, where $\widetilde{D'}$ is the closure of the interior region defined by $\widetilde{s'}$. We have $\widetilde{D}'=\widetilde{D}+a$, so $\mu(\widetilde{D}')=\mu(\widetilde{D})$, where by $\mu$ we denote the Lebesgue measure. Since $\textrm{Im}(\widetilde{s})$ and $\textrm{Im}(\widetilde{s'})$ are disjoint, we have $\textrm{Im}(\widetilde{s})\subset\widetilde{U'}$ or $\textrm{Im}(\widetilde{s})\subset(\widetilde{D'})^C$. If $\textrm{Im}(\widetilde{s})\subset\widetilde{U'}$, then $\widetilde{D}\subset \widetilde{U'}\subset \widetilde{D'}$ and $\mu(\widetilde{D'}\setminus \widetilde{D})=\mu(\widetilde{D})-\mu(\widetilde{D'})=0$. Having that  $U'\setminus D\subset D'\setminus D$, we obtain $\mu(U'\setminus D)=0$. This is a contradiction, for $U'\setminus D$ is open in $\mathbb{R}^n$, whence $\mu(U'\setminus D)>0$. We obtain $\textrm{Im}(\widetilde{s})\subset(\widetilde{D'})^C$. Similarly, $\textrm{Im}(\widetilde{s'})\subset(\widetilde{D})^C$. Hence, $\widetilde{D}\cap\widetilde{D'}=\emptyset$. On the other hand, by assumption, $y\in\widetilde{D}$ and $y=x+a\in\widetilde{D}+a=\widetilde{D'}$. This is a contradiction, so $p|_{\widetilde{D}}$ is injective.

Next, we need to prove that if $D=p(\widetilde{D})$, then $C=D$ or $C=\overline{D^C}$. First, we prove that $\textrm{Int}(C)=C\setminus\textrm{Im}(s)$ and $\textrm{Int}(\overline{C^C})=\overline{C^C}\setminus\textrm{Im}(s)=C^C$ are connected sets in $T$. This is to say that $s$ divides $T$ into two connected components: $T\setminus\textrm{Im}(s)=\textrm{Int}(C)\cup \textrm{Int}(\overline{C^C})$. As this components are open in $\mathbb{R}^n$, they are locally path-connected, and thus path-connected. 

We consider $C\setminus\textrm{Im}(s)$. Proof is by induction on number $n$ of squares in $C$. If $n=1$, the component $C$ is a square and the square without its border is connected. Let $n\ge2$, and suppose that the statement is true if $C$ consists of less than $n$ squares. Let $C_0$ be a square in $C$ with a face on the boundary $\textrm{Im}(s)$ and let $C_1$ be the union of squares in $C$ different than $C_0$. Since $s$ is an injective map from $S^{n-1}$ to $T$, the intersection $C_0\cap C_1$ is a connected collection of faces of $C_0$. Thus, the boundary $\partial C_1$ will be still homeomorphic to $S^{n-1}$. Thus, by induction hypothesis, $C_1\setminus\partial C_1$ is connected. Since the intersection $C_0\cap C_1$ is not contained in $\textrm{Im}(s)$, we obtain that $C\setminus\textrm{Im}(s)=(C_0\cup C_1)\setminus\textrm{Im}(s)$ is connected. A similar argument holds for $C^C$. 

Now, $p|_{\widetilde{D}}$ is injective, so $p(\widetilde{U})\cap p(\textrm{Im}(\widetilde{s}))=p(\widetilde{U})\cap\textrm{Im}(s)=\emptyset$, since $\widetilde{U}=\textrm{Int}(\widetilde{D})$. We have $p(\widetilde{U})\subset T\setminus\textrm{Im}(s)$. On the other hand, the interior $\widetilde{U}$ is connected. The covering map $p$ is continuous, whence $p(\widetilde{U})$ is connected. We obtain that $p(\widetilde{U})\subset\textrm{Int}(C)$ or $p(\widetilde{U})\subset \textrm{Int}(\overline{C^C})$, or equivalently $D=p(\widetilde{D})\subset C$ or  $D=p(\widetilde{D})\subset \overline{C^C}$.

Let us prove that $D=C$ or $D=\overline{C^C}$, the latter being equivalent to $C=\overline{D^C}$. Let us assume that $D\subset C$. This means that $p(\widetilde{U})\subset\textrm{Int}(C)$. Fix $x\in p(\widetilde{U})$. There exists $\widetilde{x}\in \widetilde{U}$ such that $x=p(\widetilde{x})$. Let $y\in \textrm{Int}(C)$. Since $\textrm{Int}(C)$ is path-connected, there exist a path $g:I\rightarrow \textrm{Int}(C)\subset T$ such that $g(0)=x$ and $g(1)=y$. Let $\widetilde{g}:I\rightarrow\mathbb{R}^n$ be the unique path lifting $g$ and starting at $\widetilde{x}=\widetilde{g(0)}$. Then $\widetilde{y}=\widetilde{g(1)}$ is a lift of $y$, i.e. $p(\widetilde{y})=y$. Moreover, $\textrm{Im}(\widetilde{g})\cap \textrm{Im}(\widetilde{s})=\emptyset$. For, if $\widetilde{z}\in \textrm{Im}(\widetilde{g})\cap \textrm{Im}(\widetilde{s})$, then $p(\widetilde{z})\in \textrm{Im}(g)\cap \textrm{Im}(s)$, a contradiction, since $\textrm{Im}(g)\subset\textrm{Int}(C)$ and $\textrm{Int}(C)\cap\textrm{Im}(s)=\emptyset$. We obtain that $\textrm{Im}(\widetilde{g})\subset\widetilde{U}$, so $\widetilde{y}\in\widetilde{U}$ and $y\in p(\widetilde{U})$. Whence, $p(\widetilde{U})\subset\textrm{Int}(C)$ and $D=C$. Similarly, we conclude that if $p(\widetilde{U})\subset \textrm{Int}(\overline{C^C})$, then $C=\overline{D^C}$.

We are now in position of proving the statement. Let $h:I\rightarrow T$ be a closed curve in $D$. Then there is a unique lift $\widetilde{h}:I\rightarrow\mathbb{R}^n$ of $h$ such that $\widetilde{h}(0)\in\widetilde{D}$. Moreover, since $p|_{\widetilde{D}}$ is injective and $h(0)=h(1)$, we have $\widetilde{h}(0)=\widetilde{h}(1)$, so $\widetilde{h}$ is a closed curve in $\mathbb{R}^n$, hence $[\widetilde{h}]=0$. Having that $h=p(\widetilde{h})$, we obtain $[h]=[p\circ \widetilde{h}]=p_*[\widetilde{h}]=0$. This means that if $\imath:D\rightarrow T$ denotes the inclusion map, then $\imath_*(\pi_1(D))=0$. But, we already proved that $C=D$ or $C=\overline{D^C}$. Thus, if $C=D$, then $i_*(\pi_1(C))=0$. On the other hand, if $C=\overline{D^C}$ , then as $\imath_*(\pi_1(D))=0$, Van Kampen's theorem implies  $i_*(\pi_1(C))=\mathbb{Z}^n$.

\end{proof}

\begin{corollary} Let $C$ be a component. Then
\[
i_*(\pi_1(C))=\left\{
\begin{array}{l}
0 \\
\mathbb{Z}^n
\end{array}\right.
\] 

\end{corollary}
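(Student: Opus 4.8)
The plan is to reduce the Corollary to the Lemma by analysing the finitely many boundary spheres of $C$ one at a time. Recall that in the proof of Theorem~\ref{thm} we showed, for an arbitrary component $C$, that its boundary is a disjoint union of injective images of $(n-1)$-spheres, say $\mathrm{Im}(s_1),\dots,\mathrm{Im}(s_r)$, and that every edge of $\partial C$ is white, so that each $s_k$ is null-homotopic in $T$ by $(*)$. The first part of the Lemma's proof — the injective-lift argument establishing that $p|_{\widetilde D}$ is injective and that $s$ separates $T$ into two connected pieces — uses only that a single sphere is injective and null-homotopic. Applying it to each $s_k$ separately, I obtain for every $k$ an embedded closed $n$-ball $D_k\subseteq T$ with $\partial D_k=\mathrm{Im}(s_k)$, such that $T\setminus\mathrm{Im}(s_k)$ has exactly the two connected components $D_k^{\circ}$ and $T\setminus D_k$.

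Next I would run a dichotomy on the position of $C$. Since $\mathrm{Int}(C)$ is one connected component of $T\setminus\bigcup_k\mathrm{Im}(s_k)$, it lies entirely on one side of each $s_k$. If $\mathrm{Int}(C)\subseteq D_k^{\circ}$ for some $k$, then $C\subseteq D_k$ lies inside an embedded ball, hence is null-homotopic and $i_*(\pi_1(C))=0$. In the remaining case $\mathrm{Int}(C)\subseteq T\setminus D_k$ for every $k$. Here I would first check that the balls are pairwise disjoint: if some $D_i$ met $D_j$, then, their boundary spheres being disjoint, one would satisfy $D_i\subseteq D_j^{\circ}$, forcing $\mathrm{Im}(s_i)\subseteq D_j^{\circ}$; but $\mathrm{Im}(s_i)\subseteq\overline{\mathrm{Int}(C)}\subseteq T\setminus D_j^{\circ}$, a contradiction. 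With the $D_k$ disjoint, the region $T\setminus\bigcup_k D_k$ is connected (this is where $n\ge 2$ enters), so $\mathrm{Int}(C)=T\setminus\bigcup_k D_k$ and therefore $C=T\setminus\bigcup_k D_k^{\circ}$ is the torus with $r$ disjoint open balls removed.

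Finally, in this second case I would conclude exactly as the Lemma does for a single sphere: writing $T=C\cup\bigcup_k D_k$ with each $D_k$ simply connected and each overlap $C\cap D_k=\mathrm{Im}(s_k)$, Van Kampen's theorem forces $\pi_1(T)$ to be generated by the image of $\pi_1(C)$, whence $i_*(\pi_1(C))=\mathbb{Z}^n$. Putting the two cases together yields the stated dichotomy. I expect the only delicate point to be the identification $\mathrm{Int}(C)=T\setminus\bigcup_k D_k$ — that is, verifying the disjointness of the balls and the connectedness of their complement in $T$ — since once $C$ is known to be the complement of finitely many disjoint balls, the Van Kampen step is routine and identical to the one already carried out in the Lemma.
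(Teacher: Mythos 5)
Your argument is, in substance, the paper's own proof of its first case: you run the same dichotomy (for each boundary sphere, $\mathrm{Int}(C)$ lies either in the inner region or in the outer one), the same disjointness-of-inner-regions argument, and the same Van Kampen step; the only cosmetic difference is that you work downstairs in $T$ with the regions $D_k=p(\widetilde{D}_k)$ while the paper assembles the union of all lifted interior regions in $\mathbb{R}^n$ first and then projects. However, there is a genuine gap at the very first line: you assert that $\partial C$ is a \emph{disjoint} union of injective images of $(n-1)$-spheres and that $\mathrm{Int}(C)$ is a single connected component of $T\setminus\bigcup_k\mathrm{Im}(s_k)$. Neither holds for a general component. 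A component is only required to be connected as a union of closed cubes, so two cubes of the same color meeting along a face of dimension $\le n-2$ (e.g.\ sharing only a vertex) belong to the same component; such a $C$ has disconnected interior, and its boundary spheres intersect along that lower-dimensional face, so they are not disjoint embeddings. Your dichotomy (``$\mathrm{Int}(C)$ lies entirely on one side of each $s_k$'') and your identification $\mathrm{Int}(C)=T\setminus\bigcup_k D_k$ both collapse in that situation, since different pieces of $\mathrm{Int}(C)$ can sit on different sides of a given sphere.

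This is exactly why the paper proves the dichotomy first under the hypothesis that $\mathrm{Int}(C)$ is connected, and then adds a final reduction: an arbitrary component is a union of subcomponents with connected interiors, glued along sets of dimension strictly less than $n-1$, and one more application of Van Kampen's theorem propagates the $0$-or-$\mathbb{Z}^n$ alternative to the union. Your proof needs that closing step (or an argument that disconnected interiors cannot occur, which is not true in general). A smaller point: you repeatedly call the regions $D_k$ ``embedded closed $n$-balls'' and ``simply connected.'' What the Lemma actually delivers is that $p|_{\widetilde{D}_k}$ is injective, hence that the inclusion $D_k\hookrightarrow T$ kills $\pi_1$; that weaker statement is all you use and all you are entitled to (Jordan--Brouwer alone does not make the inner region a ball), so the wording should be adjusted, though this does not affect the Van Kampen computation.
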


\begin{proof} First, we consider the case when the $\textrm{Int}(C)$ is connected. In this case, as $\textrm{Int}(C)$ is locally path-connected, whence $\textrm{Int}(C)$ is path-connected. We already noticed that the boundary of $C$ is a union of $n-1$-spheres which whose fundamental group are trivial on the torus. Let $s:S^{n-1}\rightarrow T$ be an injective map into a part of the boundary of $C$. Arguing as in the previous lemma, every lift $\widetilde{s}$ of $s$ is an injective map on $\mathbb{R}^n$ and, by Jordan-Brouwer Separation Theorem, it separates $\mathbb{R}^n$ into two open, path-connected components, of which the image of $\widetilde{s}$ is the common boundary. Let us fix a lift $\widetilde{s}$ of $s$ and let $\widetilde{U}$ be the interior region defined by $\widetilde{s}$. By the previous lemma, $p|_{\widetilde{D}}$ is injective, where $\widetilde{D}=\overline{\widetilde{U}}$. Two possibilities may occur: $p(\widetilde{U})\cap \textrm{Int}(C)\ne\emptyset$ or $p(\widetilde{U})\cap \textrm{Int}(C)=\emptyset$. Let us consider the case $p(\widetilde{U})\cap \textrm{Int}(C)\ne\emptyset$. As $\textrm{Int}(C)$ is path-connected, we obtain $\textrm{Int}(C)\subset p(\widetilde{U})$. Thus, we have $\textrm{Int}(C)\subset p(\widetilde{U})$ or $p(\widetilde{U})\cap \textrm{Int}(C)=\emptyset$, the latter being equivalent to $p(\widetilde{U})\subset C^C$. If $\textrm{Int}(C)\subset p(\widetilde{U})$, then $\textrm{Int}(C)\subset (\bigcup_{a\in\mathbb{Z}^n}p(\widetilde{U}+a))=p(\bigcup_{a\in\mathbb{Z}^n}(\widetilde{U}+a))$, where $\widetilde{U}+a$, when $a$ ranges through $\mathbb{Z}^n$, represents the interior regions of all liftings of $s$. Hence, $p^{-1}(\textrm{Int}(C))\subset\bigcup_{a\in\mathbb{Z}^n}(\widetilde{U}+a)$. On the other hand, if $p(\widetilde{U})\cap \textrm{Int}(C)=\emptyset$, then $p(\bigcup_{a\in\mathbb{Z}^n}(\widetilde{U}+a))\cap \textrm{Int}(C)=\emptyset$, so $p^{-1}(\textrm{Int}(C))\subset(\bigcup_{a\in\mathbb{Z}^n}(\widetilde{U}+a))^C=\bigcap_{a\in\mathbb{Z}^n}(\widetilde{U}+a)^C$. We conclude that $p^{-1}(C)\subset\bigcup_{a\in\mathbb{Z}^n}(\widetilde{D}+a)$ or $p^{-1}(C)\subset\bigcap_{a\in\mathbb{Z}^n}(\widetilde{U}+a)^C$. 

Now, each of the $n-1$-spheres making the boundary of $C$ is lifted to $n-1$-spheres through $p$ and each lift defines an interior and an exterior region. Let $D$ be the union of the closures of the interior regions and $E$ be the intersection of the closures of the exterior regions. By the previous argument, we obtain that $p^{-1}(C)\subset D$ or $p^{-1}(C)\subset E$, and since $p$ is surjective, we have $C\subset p(D)$ or $C\subset p(E)$. 

If $C\subset p(E)$, we actually have the equality $C=p(E)$. Indeed, if $p(E)\setminus C\ne\emptyset$, then exists a square $S$ in $p(E)$ not belonging to $C$ such that $S\cap C\ne\emptyset$. For, if that is not the case, $p(E)=(\bigcup_{S\in p(E)} S)\cup C$ would be a disconnection, which would contradict the fact that $p(E)$ is connected. But this would mean that there exists $x\in S$ such that $p^{-1}(x)\subset \bigcup_{a\in\mathbb{Z}^2}(\widetilde{U}+a)$, where $\widetilde{U}$ is an interior region of a lifting of one of the $n-1$-spheres making the border of $C$. A contradiction, since there exists $y\in p^{-1}(x)$ such that $y\in E$ and $E\cap \bigcup_{a\in\mathbb{Z}^n}(\widetilde{U}+a)=\emptyset$.

On the other hand, by the previous lemma and Van Kampen's theorem, we have $j_*(\pi_1(p(D)))=0$, where $j:p(D)\rightarrow T$ is the inclusion map. Having that $D\cup E=\mathbb{R}^n$, we obtain that $p(D)\cup p(E)=T$ and, by Van Kampen's theorem, $k_*(\pi_1(p(E)))=\mathbb{Z}^n$, where $k:p(E)\rightarrow T$ is the inclusion map. By the previous discussion, $C\subset p(D)$ or $C=p(E)$, which ends the proof in the case when $\textrm{Int}(C)$ is connected.

Any component $C$ can be seen as the union of components with connected interiors with intersections of dimension strictly less than $n-1$. The statement now follows from Van Kampen's theorem.
  
\end{proof}

The arguments behind the proof of Theorem \ref{thm} are of a geometric nature. They can be modified to apply to any linearly independent set of $n$ lines to obtain the following generalization.

\begin{corollary}
Let $K$ be a compact set such that $\mathbb{R}^n=K+\mathbb{Z}^n$ and let $l_1,\ldots, l_n\subseteq\mathbb{R}^n$ be $n$-linearly independent one dimensional vector subspaces. Then  $(K-K)\cap\mathbb{Z}^n\not\subseteq l_1\cup l_2\cup\ldots\cup l_n$.
\end{corollary}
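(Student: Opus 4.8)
The plan is to reduce the statement to Theorem~\ref{thm} by a unimodular change of coordinates, rather than rerunning the geometric argument. First I would observe that the hypotheses on $K$ (compact, $\mathbb{R}^n=K+\mathbb{Z}^n$) are exactly the conditions for $K$ to be an $\mathcal{N}$-set, so by Theorem~2 the set $A=(K-K)\cap\mathbb{Z}^n$ is relatively prime, i.e.\ it generates $\mathbb{Z}^n$. Assuming for contradiction that $A\subseteq l_1\cup\cdots\cup l_n$, I would then determine which lattice points can lie on a one-dimensional subspace $l_k$: a nonzero element of $\mathbb{Z}^n$ lies on $l_k$ only if $l_k$ is a rational line, in which case $l_k\cap\mathbb{Z}^n=\mathbb{Z}w_k$ for a primitive integer vector $w_k$ spanning $l_k$; if $l_k$ is irrational then $l_k\cap\mathbb{Z}^n=\{0\}$.

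Consequently $A\subseteq\{0\}\cup\bigcup_{k:\,l_k\text{ rational}}\mathbb{Z}w_k$, so the subgroup generated by $A$ is contained in $\sum_{k:\,l_k\text{ rational}}\mathbb{Z}w_k$. Since $A$ generates all of $\mathbb{Z}^n$, equality must hold. Because the $l_k$ are linearly independent, the associated vectors $w_k$ are linearly independent, so a free abelian group of rank strictly less than $n$ could never equal $\mathbb{Z}^n$; hence every $l_k$ is rational and $w_1,\dots,w_n$ is a $\mathbb{Z}$-basis of $\mathbb{Z}^n$. Equivalently, the matrix $W=[\,w_1\ \cdots\ w_n\,]$ lies in $GL_n(\mathbb{Z})$.

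Having produced a unimodular $W$, I would set $M=W^{-1}\in GL_n(\mathbb{Z})$, so that $Mw_k=e_k$ carries each line $l_k$ onto the $k$-th coordinate axis, and consider $K'=M(K)$. Since $M$ is a linear homeomorphism preserving $\mathbb{Z}^n$, the set $K'$ is compact and $K'+\mathbb{Z}^n=M(K+\mathbb{Z}^n)=M(\mathbb{R}^n)=\mathbb{R}^n$, so $K'$ is again an $\mathcal{N}$-set. Moreover $K'-K'=M(K-K)$, and because $M\mathbb{Z}^n=\mathbb{Z}^n$ one checks that $(K'-K')\cap\mathbb{Z}^n=M\bigl((K-K)\cap\mathbb{Z}^n\bigr)\subseteq M(l_1\cup\cdots\cup l_n)=\bigcup_k M(l_k)$, which is precisely the union of the coordinate axes. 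This contradicts Theorem~\ref{thm}, completing the argument.

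The transport of the difference set through $M$ and the verification that $M(K)$ remains an $\mathcal{N}$-set are routine consequences of $M\in GL_n(\mathbb{Z})$. The one step that requires genuine attention is the rationality/unimodularity reduction: relative primality (Theorem~2) must be used both to rule out any irrational $l_k$, which would meet $\mathbb{Z}^n$ only at the origin and thus prevent $A$ from generating $\mathbb{Z}^n$, and to upgrade $w_1,\dots,w_n$ from a mere $\mathbb{R}$-basis to an honest $\mathbb{Z}$-basis of the lattice. This is exactly what licenses the assumption, without loss of generality, that the $n$ lines are the coordinate axes.
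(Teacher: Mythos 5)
Your proof is correct, but it takes a genuinely different route from the paper. The paper offers no reduction at all: it simply asserts that the geometric machinery behind Theorem~\ref{thm} (the dyadic-cube approximation, the edge-labelling cocycle $\psi$, the colouring of cubes by which line the difference vector lies on, and the component/boundary analysis) ``can be modified'' to handle an arbitrary linearly independent family of lines. You instead reduce the general statement to the already-proved coordinate-axis case: Theorem~2 forces $(K-K)\cap\mathbb{Z}^n$ to generate $\mathbb{Z}^n$, which rules out irrational lines (they meet $\mathbb{Z}^n$ only at $0$) and forces the primitive vectors $w_1,\dots,w_n$ on the rational lines to be a $\mathbb{Z}$-basis, so that $W=[\,w_1\ \cdots\ w_n\,]\in GL_n(\mathbb{Z})$ and $K'=W^{-1}(K)$ transports the hypotheses of Theorem~\ref{thm} verbatim. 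All the individual steps check out: the subgroup-containment argument pinning down $W\in GL_n(\mathbb{Z})$, the preservation of compactness and of the covering property under a unimodular map, and the identity $(K'-K')\cap\mathbb{Z}^n=W^{-1}\bigl((K-K)\cap\mathbb{Z}^n\bigr)$. Your approach buys a complete, self-contained deduction from results already established (and arguably supplies the rigorous proof the paper omits), at the cost of invoking Theorem~2 as an external input; the paper's intended route avoids that dependence but would require reworking the entire colouring argument for skew lines, which is considerably more delicate and is not actually carried out in the text.
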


\end{document}